\newtheorem{thm}{Theorem}
\newtheorem{prop}[thm]{Proposition}
\newtheorem{lem}[thm]{Lemma}
\theoremstyle{remark}
\theoremstyle{definition}
\newcommand{\col}{\kern -3pt :}
\newcommand{\C}{\mathbb C}
\newcommand{\R}{\mathbb R}
\newcommand{\HH}{\mathbb H}
\newcommand{\Id}{\mathrm{Id}}
\newcommand{\End}{\mathrm{End}}
\newcommand{\Isom}{\mathrm{Isom}^+}
\newcommand{\Aut}{\mathrm{Aut}}
\newcommand{\Diff}{\mathrm{Diff}}
\newcommand{\Ad}{\mathrm{Ad}}
\newcommand{\T}{\mathcal T}
\newcommand{\PP}{\mathbb P}
\renewcommand{\leq}{\leqslant}
\renewcommand{\geq}{\geqslant}
\renewcommand{\phi}{\varphi}
\title[Representations
of the braid group]
{Quantum Teichm\"uller theory\\
and representations\\ 
of the pure braid group}
\author{Francis Bonahon}
\address {Department
of Mathematics,  University of
Southern California, Los Angeles
CA~90089-2532, U.S.A.}
\email{fbonahon@math.usc.edu}
\date{\today}
\dedicatory{Dedicated to the memory of Xiao-Song Lin,\\
an important mathematician and a great colleague}
\begin{document}

\begin{abstract}
We adapt some of the methods of quantum Teichm\"uller theory to construct a family of representations of the pure braid group of the sphere. 
\end{abstract}

\maketitle

This article presents a variation of the constructions of \cite{BBL, BonLiu} on the finite-dimensional representation theory of the quantum Teichm\"uller space. 

In these two earlier articles, the author and his collaborators considered the quantum Teichm\"uller space  $\T_S^q$  of a connected oriented surface $S$ with negative Euler characteristic, following V. Fock, L. Chekhov \cite{Fok, CF, CF2} and R. Kashaev \cite{Kash3}.  This is designed as a non-commutative deformation of the algebra of rational functions on the classical Teichm\"uller space of $S$, depending on a parameter $q\in \C^*$. It is a purely algebraic object, defined in terms of the combinatorics of the Harer-Penner complex of the ideal cell decompositions of $S$. 

Finite-dimensional representations of the quantum Teichm\"uller space exist only when $q$ is a root of unity, and more precisely if only if a power of $-q$ is equal to $-1$.  Whether one considers irreducible representations as in \cite{BonLiu}, or local representations as in \cite{BBL}, the main results have the same flavor. If $-q$ is a primitive $N$--root of $-1$, a representation of the quantum Teichm\"uller space is classified up to isomorphism  by:
\begin{enumerate}
\item a group homomorphism $r$ from the fundamental group $\pi_1(S)$ to the group $\Isom(\HH^3)$ of orientation-preserving isometries of the hyperbolic space $\HH^3$;
\item the choice, for each peripheral subgroup $\pi$ of $\pi_1(S)$, of a point of the sphere at infinity $ \partial _\infty \HH^3$ which is fixed by $r(\pi)$;
\item the choice of $N$--roots for finitely many complex numbers associated to $r$ and to the fixed point data. 
\end{enumerate}

Not every such data is realized by a representation of the quantum Teichm\"uller space. One needs the group homomorphism and the fixed point data to be \emph{peripherally generic}, in the sense that every ideal triangulation can be realized as the pleating locus of an $r$--invariant pleated surface in $\HH^3$, whose faces are ideal triangles with vertices among the peripheral fixed points; see \cite{BonLiu, BBL} for details. Such a condition is automatically realized if $r$ is injective, which holds in many cases of geometric interest.

As an application, if $G$ is a subgroup of the mapping class group $\pi_0\, \Diff(S)$ which fixes data as in (1-3) above, then $G$ fixes a representation of the quantum Teichm\"uller space up to isomorphism, and the corresponding intertwining isomorphisms provide a projective representation of $G$. This principle is exploited in \cite{BonLiu, BBL} for the case where $G$ is cyclic generated by a pseudo-Anosov isomorphism. From a conceptual point of view, it is probably best understood in terms of the Kashaev bundle constructed in \cite[\S 8]{BBL}. See also \cite{Liu2} for explicit computations. 

The current paper aims at obtaining a similar representation when $G$ is a much larger subgroup of the whole mapping class group $\pi_0 \,\Diff(S)$. For this, we need a homomorphism $r\col \pi_1(S) \to \Isom(\HH^3)$ which is invariant under $G$. The trivial homomorphism is an obvious candidate. Then the fixed point data as in (2) above just consists of the choice of one point of the sphere at infinity $ \partial _\infty \HH^3$ for each puncture of $S$. Respecting this fixed point data will require the elements of $G$ to fix each puncture of $S$. 

A much more serious problem is that  the trivial homomorphism  is very far from being peripherally generic. We consequently need to adapt the definition of the quantum Teichm\"uller space, and restrict attention to ideal triangulations that are realized by the trivial homomorphism. If the fixed points associated to the punctures are all distinct, this just means that each edge of the triangulation joins two distinct punctures, which is no serious constraint as soon as there is more than one puncture. However, one then encounters a new technical problem. We need to know that any two such ideal triangulations can be joined by a sequence of diagonal exchanges, involving only ideal triangulations with the same property. This result may be true in its full generality, but we chose to use a ``cheap trick'' by further restricting the type of ideal triangulations considered. We limit the analysis to the case where $S$ is a punctured sphere, and consider the \emph{Rivin complex}, formed by all cell decompositions of $S$ that can be realized by a convex polyhedron in $\R^3$ inscribed in the sphere. This complex is realized by a cell decomposition of the Teichm\"uller space of $S$ and, in particular, its 1--skeleton is connected, which is exactly the property we need. 

We can now describe the output of this construction. Let $S$ be obtained by removing $r\geq 3$ points from the sphere $\mathbb S^2 \subset \R^3$, so that the mapping class group $\pi_0\, \Diff(S)$ is also the braid group $B_r(\mathbb S^2)$ of the sphere. Let $P_r(\mathbb S^2)$ be the pure braid group, consisting of those mapping classes that fix each puncture. 

Choose a root of unity $q\in \C$  for which $-q$ is a primitive $N$--root of $-1$, as well as an $N$--root of unity $p_j$ for each puncture. 

We then associate to this data a projective representation of dimension $N^{r-3}-1$ for the pure braid group $P_r(\mathbb S^2)$, namely a group homomorphism 
$$
R\col P_r(\mathbb S^2) \to \Aut \left( \mathbb {CP}^{N^{r-3}-1} \right)
$$
valued in the group of linear automorphisms of the projective space $\mathbb{CP}^{N^{r-3}-1} $. This representation depends on the conformal structure of $S$, namely on the specific points $v_1$, $v_2$, \dots, $v_r \in \mathbb S^2$ such that $S = \mathbb S^2 -\{v_1, v_2, \dots, v_r\}$.

In the case of the 4--times punctured sphere, the formulas of \cite{Liu2} provide an easy way to explicitly compute this homomorphism. The computations in the general case are somewhat more elaborated. 

At this point, it is unlikely that much geometric or algebraic insight can be gained from the representations $R$. It was developed as a ``toy model'' for the Volume Conjecture of \cite{Kash2, MM}, which was of great interest to  Xiao-Song Lin towards the end of his life (see \cite{DasLin}, and several unpublished contributions). Many features of our construction are very reminiscent of the challenges that one encounters with this conjecture.  In particular, it would be interesting to let $N$ tend to $\infty$, and determine what happens to the trace functions associated to the corresponding representations $R$. 

We are very grateful to the referee for a careful reading of the original version of this article. 

\section{The Chekhov-Fock algebra of an ideal triangulation}

Let $S$ denote the $r$-times punctured sphere, considered as an abstract topological surface. 

An \emph{ideal cell decomposition} of $S$ is a family $\lambda$ of disjoint arcs properly embedded in $S$ (namely going from puncture to puncture) such that each component of the complement $S-\lambda$ is homeomorphic to an open disk, and is adjacent to at least three sides of arcs of $\lambda$. Equivalently, splitting $S$ open along $\lambda$ provides a finite family of polygons with their vertices removed (namely \emph{ideal polygons}), none of which is a monogon or a digon; these ideal polygons are the \emph{faces} of $\lambda$.  The components of the ideal cell decomposition are also called its \emph{edges}. 

We will identify two ideal cell decompositions of $S$ when they differ only by an isotopy of $S$.

An \emph{ideal triangulation} of $S$ is an ideal cell decomposition whose faces are all ideal triangles.  An Euler characteristic computation shows that an ideal triangulation has $n=3r-6$ sides. 

Fix a non-zero complex number $q = \mathrm e^{\pi\mathrm i \hbar} \in \C^*$.

The \emph{Chekhov-Fock algebra} of an ideal triangulation
$\lambda$ is the algebra $\T_\lambda^q$ over $\C$ defined by generators
$X_1^{\pm1}$, $X_2^{\pm1}$, \dots, $X_n^{\pm1}$  associated to the edges $\lambda_1$, $\lambda_2$, \dots, $\lambda_n$ of $\lambda$, and by
relations
$X_iX_j= q^{2\sigma_{ij}}X_jX_i$ where the integers $\sigma_{ij}\in \{ 0, \pm1, \pm2\}$ are
defined as follows: if $a_{ij}$ is the number of angular sectors  delimited  by $\lambda_i$ and  $\lambda_j$ in the
faces of
$\lambda$, and with $\lambda_i$
coming first counterclockwise, then $\sigma_{ij} = a_{ij}-a_{ji}$. 

In practice, the elements of the Chekhov-Fock algebra $\mathcal
T_\lambda^q$ are Laurent polynomials in the variables $X_i$,
and are manipulated in the usual way except that their multiplication
uses the skew-commutativity relations $X_iX_j= q^{2\sigma{ij}}X_jX_i$.

Given two ideal triangulations $\lambda$ and $\lambda'$ of $S$, one would  like to identify the Chekhov-Fock algebras $\T_\lambda^q$ and $\T_{\lambda'}^q$. This is not quite possible as stated, and one needs to consider the fraction algebras  $\widehat\T_\lambda^q$ and $\widehat\T_{\lambda'}^q$ of  $\T_\lambda^q$ and $\T_{\lambda'}^q$. These fraction algebras exist because the Chekhov-Fock algebras satisfy the so-called Ore Condition; see \cite{Coh}. In practice, the elements of the fraction algebra are rational fractions in the variables $X_i$,
and are manipulated by making use of the skew-commutativity relations $X_iX_j= q^{2\sigma{ij}}X_jX_i$.

\begin{thm}[Chekhov-Fock \cite{Fok, CF, CF2}; see also \cite{Liu1}]
\label{thm:CoordChanges}
As $\lambda$ and $\lambda'$ range over all ideal triangulations of $S$, there exists a family of algebra isomorphisms
$$
\Phi_{\lambda\lambda'}^q \col \widehat  \T_{\lambda'}^q \to \widehat  \T_\lambda^q
$$
such that $\Phi_{\lambda\lambda''}^q = \Phi_{\lambda\lambda'}^q \circ \Phi_{\lambda'\lambda''}^q$ for every ideal triangulations $\lambda$, $\lambda'$, $\lambda''$. 
\end{thm}

The isomorphisms $\Phi_{\lambda\lambda'}^q $ are called \emph{coordinate change isomorphisms} for the quantum Teichm\"uller space of $S$. Indeed, this quantum Teichm\"uller space is the algebra defined by gluing together all the Chekhov-Fock fraction  algebras $\widehat\T_\lambda^q$ by the $\Phi_{\lambda\lambda'}^q $. This turns the $\widehat\T_\lambda^q$ into charts for the quantum Teichm\"uller space, while the isomorphisms $\Phi_{\lambda\lambda'}^q $ play the role of coordinate changes. 

A relatively surprising fact is that these coordinate change isomorphisms $\Phi_{\lambda\lambda'}^q $ are essentially determined by the property of Theorem~\ref{thm:CoordChanges} and by an additional relatively mild condition \cite{Bai}.

\section{Representations of the Chekhov-Fock algebra}

Consider a finite-dimensional representation of the Chekhov-Fock algebra $\T_\lambda^q$, namely an algebra homomorphism $\rho\col \T_\lambda^q \to \End (V)$ valued in the algebra of linear endomorphisms of a finite-dimensional vector space $V$. Elementary considerations on the relation $XY = q^2 YX$ show that these can exist only when $q^2$ is a root of unity. 

We will consequently assume that $q^2$ is a primitive $N$--root of unity for some integer $N\geq 1$. In particular, $q^N = \pm 1$.  Important parts of the representation theory work much better when $q^N = (-1)^{N+1}$, and we will therefore assume that this property holds henceforth.

Since  $q^{2N} = 1$, the relations $X_iX_j= q^{2\sigma{ij}}X_jX_i$ show that each  $X_i^N$ is central in $\T_\lambda^q$. 

The center of $\T_\lambda^q$ contains a few less obvious elements. If $\lambda_{i_1}$, $\lambda_{i_2}$, \dots, $\lambda_{i_k}$ are the edges of $\lambda$ that lead to the $j$--th puncture of $S$, the element 
$$
P_j = q^{-\sum_{u<v} \sigma_{i_u i_v}} X_{i_1} X_{i_2} \dots X_{i_k}
$$
is also central (see \cite[\S 3]{BonLiu}). The $q$--coefficient, known in the physics literature as the Weyl quantum ordering, is here introduced to simplify further computations. In particular, $P_j$ is independent of the ordering of the $X_{i_l}$, and $P_j^N = X_{i_1}^N X_{i_2}^N \dots X_{i_k}^N$. 

Similarly, there is a global central element
$$
H = q^{-\sum_{u<v} \sigma_{uv} }X_1 X_2 \dots X_n,
$$
such that $H^2 = P_1 P_2 \dots P_r$. 

We now restrict attentions to  \emph{irreducible} representations, which leave no proper subspace $W\subset V$ invariant. Then, $\rho$ must send each central element to a scalar multiple of the identity $\Id_V$. In particular, for the above central elements, there exists  scalars $x_1$, $x_2$, \dots, $x_n$, $p_1$, $p_2$, \dots $p_r$, $h \in \C^*$ such that $\rho(X_i^N) = x_i \, \Id_V$, $\rho(P_j) = p_j\, \Id_V$ and $\rho(H) = h\, \Id_V$. 

Note that, if $P_j = q^{-\sum_{u<v} \sigma_{i_u i_v}} X_{i_1} X_{i_2} \dots X_{i_k}$, $p_j^N = x_{i_1} x_{i_2} \dots x_{i_k}$ since $P_j^N = X_{i_1}^N X_{i_2}^N \dots X_{i_k}^N$. Similarly, $h^2 = p_1p_2 \dots p_r$.

\begin{thm}[{\cite[Theorems 20, 21 and 22]{BonLiu}}]
\label{thm:IrredRepClass}
Let the surface $S$ be a sphere minus $r\geq 3$ punctures, and let  $q$ be a primitive $N$--root of $(-1)^{N+1}$. Then, up to isomorphism, an irreducible representation $\rho\col \T_\lambda^q \to \End (V)$ of the Chekhov-Fock algebra of $\T_\lambda^q$ of the ideal triangulation $\lambda$ has dimension $N^{r-3}$ and is classified by the data of: 
\begin{enumerate}
\item for the $i$--th edge $\lambda_i$ of $\lambda$, the number $x_i \in \C^*$ such that  $\rho(X_i^N) = x_i \, \Id_V$;
\item for the $j$--th puncture of $S$ adjacent to the edges $\lambda_{i_1}$, $\lambda_{i_2}$, \dots, $\lambda_{i_k}$ of $\lambda$, the $N$--root $p_j\in \C^*$ of $x_{i_1} x_{i_2} \dots x_{i_k}$ such that $\rho(P_j) = p_j\, \Id_V$;
\item the square root  $h\in \C^*$ of $ p_1p_2 \dots p_r$ such that $\rho(H) = h\, \Id_V$. \qed
\end{enumerate}
\end{thm}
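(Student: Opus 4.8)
The plan is to treat $\T_\lambda^q$ as the quantum torus attached to the antisymmetric integer matrix $\sigma = (\sigma_{ij})$, and to read off its irreducible representations from the arithmetic of this matrix. First I would bring $\sigma$ into skew-normal form: there is a matrix $A \in \mathrm{GL}_n(\Z)$ such that $A^{\mathrm t}\sigma A$ is block-diagonal, with $2\times 2$ blocks $\bigl(\begin{smallmatrix} 0 & d_k \\ -d_k & 0\end{smallmatrix}\bigr)$ for the nonzero elementary divisors $d_1, \dots, d_g$ of $\sigma$, together with a zero block whose size is the corank of $\sigma$. Because $A$ is invertible over $\Z$, the monomial substitution $Y_j = q^{c_j}\prod_i X_i^{A_{ij}}$, with the exponents $c_j$ chosen to absorb the Weyl-ordering factors, is a change of generators identifying $\T_\lambda^q$ with a twisted tensor product of $g$ ``quantum planes'' $Y_k Z_k = q^{2 d_k} Z_k Y_k$ and a central Laurent algebra in the remaining corank-many variables.

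For each quantum plane, $q^{2d_k}$ is a primitive $m_k$-th root of unity with $m_k = N / \gcd(N, d_k)$, and its irreducible representations are the familiar $m_k$-dimensional clock-and-shift modules, classified by the nonzero scalars $Y_k^{m_k}$ and $Z_k^{m_k}$. At a fixed value of these scalars each quantum-plane factor specializes to the full matrix algebra $\End(\C^{m_k})$, so $\T_\lambda^q$ is Azumaya over its center of PI-degree $\prod_k m_k$; consequently its irreducible representations are in bijection with the characters of its center, all of dimension $\prod_k m_k$, obtained as tensor products of clock-and-shift modules with the central variables acting by arbitrary nonzero scalars.

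It remains to compute the combinatorial invariants of $\sigma$ for an ideal triangulation of the $r$-punctured sphere and to match them with the data (1)--(3). Here I would check that the puncture vectors $\pi_j = \sum_{\lambda_i \ni j} e_i$ and the total vector $\mathbf 1 = \sum_i e_i = \tfrac12\sum_j \pi_j$ lie in the kernel of $\sigma$ over $\Z$, which is precisely why $P_j$ and $H$ are central and satisfy $H^2 = P_1 \cdots P_r$; a rank count (consistent with $\sigma = 0$ for $r=3$ and a single Weyl pair for $r=4$) then gives $\operatorname{corank}\sigma = r$, hence $\operatorname{rank}\sigma = 2(r-3)$ and $g = r-3$. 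The central directions account exactly for the scalars $x_i = \rho(X_i^N)$, the numbers $p_j$ with $p_j^N = \prod x_i$ forced by $P_j^N = \prod X_i^N$, and the square root $h$ of $p_1\cdots p_r$; the hypothesis that $q$ be a primitive $N$-th root of $(-1)^{N+1}$ is what keeps these $N$-th-power and square-root relations free of spurious signs.

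The main obstacle is the final number-theoretic point: to obtain the clean dimension $N^{r-3}$ rather than $\prod_k N/\gcd(N,d_k)$, I must show that every nonzero elementary divisor $d_k$ of $\sigma$ is coprime to $N$ — ideally that the form induced by $\sigma$ on the quotient $\Z^n/\ker\sigma$ is unimodular, so that each $m_k = N$. This is the one step genuinely special to triangulations of the sphere, and I expect it to require either an explicit symplectic basis adapted to the triangulation or an induction along diagonal exchanges, using the connectivity supplied by the Rivin complex. Everything else is the standard representation theory of quantum tori at roots of unity together with the explicit clock-and-shift construction, which simultaneously supplies existence for each admissible choice of $(x_i, p_j, h)$.
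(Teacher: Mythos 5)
The paper contains no proof of this statement: it is imported verbatim from \cite{BonLiu}, where Theorems 20--22 are established by embedding $\T_\lambda^q$ into a tensor product of \emph{triangle algebras}, one clock-and-shift factor of dimension $N$ per face of $\lambda$, and analyzing how representations of that tensor product restrict and decompose; so your direct quantum-torus route is genuinely different from the source's. As a strategy it is viable, but it has a real gap, and you have located it yourself: everything decisive --- the dimension $N^{r-3}$, the Azumaya property, and the bijection between irreducibles and central characters --- rests on the unproved claim that the form induced by $\sigma$ on $\Z^n/\ker\sigma$ is unimodular (coprimality of the $d_k$ to one fixed $N$ would not even suffice, since the statement is asserted for \emph{every} $N$ with $q^N = (-1)^{N+1}$). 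This is not a routine verification: the analogous claim is false in positive genus --- for the once-punctured torus $\sigma$ has elementary divisors $(2,2)$, which is exactly why additional invariants appear there for even $N$, as the paper remarks after the theorem --- so any proof must use the planarity of $S$ in an essential way. Moreover, your proposed induction cannot run over the Rivin complex: the theorem concerns \emph{all} ideal triangulations of the punctured sphere, including non-simple ones having edges that return to the same puncture, and such triangulations are never Rivin (Rivin implies simple). The repairable version is: a diagonal exchange transforms $\sigma$ by a congruence $\sigma' = E^{\mathrm t}\sigma E$ with $E \in \mathrm{GL}_n(\Z)$ (this is matrix mutation, as for exchange matrices of triangulated surfaces), hence preserves elementary divisors; flip-connectivity of the full set of ideal triangulations is classical (Harer, Penner, Hatcher --- the Rivin complex is irrelevant at this level of generality); and a base case must then be computed explicitly.

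A second, smaller gap: to identify central characters with the data (1)--(3) you need the center of $\T_\lambda^q$ to be generated by the $X_i^N$, the $P_j$ and $H$, that is, $\ker(\sigma \bmod N) = N\Z^n + L$ where $L$ is the sublattice of $\Z^n$ spanned by the puncture vectors $\pi_j$ and $\mathbf 1$. This requires both the rank count you assert but do not prove (linear independence of the $\pi_j$, giving $\operatorname{corank}\sigma = r$ and hence $g = r-3$ from $n = 3r-6$) and the \emph{saturation} of $L$ in $\Z^n$; without saturation there would be further central monomials, hence classifying invariants beyond $x_i$, $p_j$, $h$, and the parametrization in (1)--(3) would be incomplete. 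Both points are of the same arithmetic nature as the unimodularity claim and would plausibly fall out of the same explicit adapted basis, but as the proposal stands, the one input that makes the theorem true is hoped for rather than established.
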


We will refer to the numbers  $x_i$, $p_j$ and $h$ as, respectively,   the \emph{edge weights}, \emph{puncture weights} and \emph{global charge} classifying the representation $\rho$. There is a similar result for surfaces of positive genus, but one then needs additional invariants when $N$ is even. 

Suppose that we are given two ideal triangulations $\lambda$ and $\lambda'$, and a representation $\rho\col \T_\lambda^q \to \End (V)$. We would like to define a representation $\rho\circ \Phi_{\lambda\lambda'}^q \col \T_{\lambda'}^q \to \End (V)$. Unfortunately, the coordinate change isomorphism $\Phi_{\lambda\lambda'}^q$ is valued in the fraction algebra $\widehat\T_\lambda^q$, and dimension considerations easily show that $\rho$ cannot have any extension to $\widehat\T_\lambda^q$. This obliges to use an \emph{ad hoc} definition.

We will say that $\rho\circ \Phi_{\lambda\lambda'}^q \col \T_{\lambda'}^q \to \End (V)$ \emph{makes sense} if, for every $X' \in \T_{\lambda'}^q$, its image $\Phi_{\lambda\lambda'}^q(X') \in \widehat \T_\lambda^q$ can be written as 
$$
\Phi_{\lambda\lambda'}^q(X') = PQ^{-1} = (Q')^{-1} P'
$$ 
for some $P$, $Q$, $P'$, $Q'\in \T_\lambda^q$ such that $\rho(Q)$ and $\rho(Q')$ are invertible in $\End(V)$. When this condition holds, we define $\rho\circ \Phi_{\lambda\lambda'}^q \col \T_{\lambda'}^q \to \End (V)$  by the property that $\rho\circ \Phi_{\lambda\lambda'}^q (X') = \rho(P) \circ \rho(Q)^{-1}$. One easily checks that this is independent of the decomposition $
\Phi_{\lambda\lambda'}^q(X') = PQ^{-1} = (Q')^{-1} P'
$ as above, and that the map $\rho\circ \Phi_{\lambda\lambda'}^q \col \T_{\lambda'}^q \to \End (V)$ so defined is an algebra homomorphism.

\begin{figure}[htb]
\SetLabels
( 0 * 1.03) $v_{\mathrm{left}}$ \\
( 0 * -0.08) $v_-$ \\
( .33 * -0.08) $v_{\mathrm{right}}$ \\
( .33 * 1.03) $v_+$ \\
( .675 * 1.03) $v_{\mathrm{left}}$ \\
( .675 * -0.08) $v_-$ \\
( 1.005 * -0.08) $v_{\mathrm{right}}$ \\
( 1.005 * 1.03) $v_+$ \\
( .13 * .5) $\lambda_1$ \\
( .17 *  1.01) $\lambda_2$ \\
( .34 * .5 ) $\lambda_3$ \\
( .17 * -.08 )  $\lambda_4$ \\
( -.02 * .5 ) $\lambda_5$ \\
(  .87* .5 ) $\lambda_1'$ \\
( .83 * 1.01 ) $\lambda_2'$ \\
( 1.02 * .5 ) $\lambda_3'$ \\
( .83 * -.08 ) $\lambda_4'$ \\
( .66 * .5 ) $\lambda_5'$ \\
\endSetLabels
\centerline{
\AffixLabels{\includegraphics{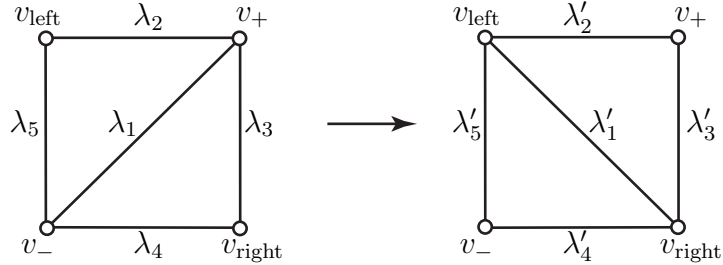}}}
\caption{A diagonal exchange}
\label{fig:DiagEx}
\end{figure}

In general, given a representation  $\rho\col \T_\lambda^q \to \End (V)$  and another ideal triangulation $\lambda'$, it is not easy to decide when the representation $\rho\circ \Phi_{\lambda\lambda'}^q \col \T_{\lambda'}^q \to \End (V)$ makes sense. This is relatively simple when $\lambda$ and $\lambda'$ differ only by one edge, as in Figure~\ref{fig:DiagEx}. We then say that $\lambda$ and $\lambda'$ differ by a \emph{diagonal exchange}. The diagonal exchange is \emph{embedded} when the four sides of the square where the diagonal exchange takes place are distinct, namely when there are no outside gluings between these sides.

\begin{prop}[{\cite[Lemmas 27 and 29]{BonLiu}}]
\label{prop:DiagExWeights}
Let the ideal triangulations $\lambda$ and $\lambda'$ differ by an embedded diagonal exchange as in Figure~\ref{fig:DiagEx}, with the edge indexing shown there. Suppose that the irreducible representation $\rho\col \T_\lambda^q \to \End (V)$ is classified by the edge weights $x_i$, the puncture weights $p_j$ and the global charge $h$. If $x_i \not = -1$, then the representation $\rho \circ \Phi_{\lambda\lambda'}^q \col \T_{\lambda'}^q \to \End(V)$ is well-defined, and is classified by the same punctured weights $p_j$, the same global charge $h$, and by the edge weights $x_i'$ defined by
\begin{align*}
x'_1 &= x_1 ^{-1}\\
x_2' &= (1+x_1) x_2\\
x_3' &= (1 + x_1^{-1})^{-1} x_3\\
x_4' &= (1+x_1) x_4\\
x_5' &= (1 + x_1^{-1})^{-1} x_5\\
x_i' &= x_i \text{ if } i \not= 1, 2, 3, 4, 5.
\end{align*}
\vskip -\belowdisplayskip 
\vskip - \baselineskip \qed
\end{prop}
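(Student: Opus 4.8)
The plan is to reduce everything to the explicit closed form of the coordinate change isomorphism $\Phi_{\lambda\lambda'}^q$ attached to a single diagonal exchange, as provided by Chekhov--Fock theory \cite{Fok, CF, CF2, Liu1}. With the indexing of Figure~\ref{fig:DiagEx}, this formula sends the diagonal generator to $\Phi_{\lambda\lambda'}^q(X_1') = X_1^{-1}$, fixes $X_i'$ for $i \neq 1,2,3,4,5$, and sends each side generator $X_2', X_3', X_4', X_5'$ to the corresponding $X_i$ multiplied, on the appropriate side and up to an explicit power of $q$, by a factor of the form $(1+qX_1)$ or $(1+qX_1^{-1})^{-1}$. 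The only denominators occurring in these expressions, and in those for the inverses $(X_i')^{-1}$, are $1+qX_1$ and $1+qX_1^{-1}$ together with their $q$-commuted variants. Hence checking that $\rho\circ\Phi_{\lambda\lambda'}^q$ \emph{makes sense} reduces to checking that $\rho$ sends these two elements to invertible operators.

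First I would dispose of this invertibility, which is the conceptual core of the statement. Since $\rho(X_1^N) = x_1\,\Id_V$, every eigenvalue $\xi$ of $\rho(X_1)$ satisfies $\xi^N = x_1$. Therefore $\rho(1+qX_1) = \Id_V + q\,\rho(X_1)$ fails to be invertible only if $-q^{-1}$ is an eigenvalue of $\rho(X_1)$, which would force $(-q^{-1})^N = x_1$. The standing hypothesis $q^N = (-1)^{N+1}$ gives $q^{-N} = (-1)^{N+1}$ and hence $(-q^{-1})^N = (-1)^N(-1)^{N+1} = -1$; the same computation applied to $\rho(X_1^{-1})$ shows that $\Id_V + q\,\rho(X_1)^{-1}$ is singular only if $(-q)^N = x_1$, again giving $x_1 = -1$. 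Consequently both denominators have invertible image as soon as $x_1 \neq -1$, which is precisely the hypothesis. Writing each $\Phi_{\lambda\lambda'}^q(X_i')$ and $\Phi_{\lambda\lambda'}^q((X_i')^{-1})$ in the two forms $PQ^{-1} = (Q')^{-1}P'$ with these invertible denominators then shows that $\rho\circ\Phi_{\lambda\lambda'}^q$ makes sense, and it is an algebra homomorphism by the general remark preceding the statement.

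It then remains to identify the weights of $\rho' = \rho\circ\Phi_{\lambda\lambda'}^q$. As $\Phi_{\lambda\lambda'}^q$ is an isomorphism of fraction algebras, the commutant of $\rho'$ is contained in that of $\rho$, so $\rho'$ is again irreducible and Theorem~\ref{thm:IrredRepClass} applies; its invariants are read off from the scalars $\rho'((X_i')^N)$, $\rho'(P_j')$ and $\rho'(H')$. The equality $x_1' = x_1^{-1}$ is immediate from $\Phi_{\lambda\lambda'}^q(X_1') = X_1^{-1}$. For a side edge, say $\lambda_2$, I would raise the generator formula to the $N$-th power: writing $\Phi_{\lambda\lambda'}^q(X_2') = (1+qX_1)X_2 = X_2 + qX_1X_2$, the two summands $q$-commute with ratio $q^{-2\sigma_{12}}$, a primitive $N$-th root of unity, so the ``quantum freshman's dream'' $(U+V)^N = U^N + V^N$ applies and gives $\Phi_{\lambda\lambda'}^q((X_2')^N) = (1 + c\,X_1^N)X_2^N$ with $c = q^N q^{-\sigma_{12}N(N-1)}$; here the hypothesis $q^N = (-1)^{N+1}$ forces $c = 1$ for every parity of $N$, so that $\rho'((X_2')^N) = (1+x_1)x_2\,\Id_V$ and $x_2' = (1+x_1)x_2$. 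The edges $\lambda_3, \lambda_4, \lambda_5$ are handled identically, with the factor $(1+qX_1^{-1})^{-1}$ producing the coefficient $(1+x_1^{-1})^{-1}$. Finally I would verify $\Phi_{\lambda\lambda'}^q(P_j') = P_j$ and $\Phi_{\lambda\lambda'}^q(H') = H$ by substituting the flip formulas into these monomials, the point being that the $X_1$-dependent factors telescope and cancel thanks to the Weyl ordering normalization (they are the quantum analogues of the flip-invariant cusp and total monomials); this yields the invariance of the puncture weights $p_j$ and global charge $h$.

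I expect the main obstacle to be bookkeeping rather than conceptual: fixing the powers of $q$ in the flip formula exactly, including their placement to the left or right of each $X_i$, and then confirming that every resulting constant collapses to $1$ under $q^N = (-1)^{N+1}$ once one passes to $N$-th powers and to the cusp monomials. By contrast, the invertibility step is short, and it is there that the hypothesis $x_1 \neq -1$ enters in an essential way.
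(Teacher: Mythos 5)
Your proposal is correct and follows essentially the same route as the paper's source for this statement: the paper cites \cite[Lemmas 27 and 29]{BonLiu} and remarks that the result is a consequence of the explicit flip formula, and your argument---invertibility of $\rho(1+qX_1)$ and $\rho(1+qX_1^{-1})$ via the eigenvalue computation $(-q^{\pm1})^N=-1$ under $q^N=(-1)^{N+1}$, the quantum binomial identity $(U+V)^N=U^N+V^N$ to extract the $N$-th power weights, and the invariance of the central elements $P_j$ and $H$ under $\Phi_{\lambda\lambda'}^q$---is exactly the proof carried out there. The details you flag as bookkeeping (the $q$-powers collapsing to $1$, e.g.\ $q^Nq^{-\sigma_{12}N(N-1)}=1$ since $\sigma_{12}=\pm1$ for an embedded exchange, and the invertibility of the $q$-commuted denominators $1+q^{2k+1}X_1$ needed to pass from generators to the full ``makes sense'' condition) all check out.
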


This is a relatively simple consequence of the formula for the coordinate change isomorphism $\Phi_{\lambda\lambda'}^q$ for a diagonal exchange. With significantly more work to control the invertibility of the images of denominators, it can be extended to the case where $\lambda$ and $\lambda'$ can be connected by a sequence of diagonal exchanges.

\begin{prop}[{\cite[Lemmas 25 and 26]{BonLiu}}]
\label{prop:ManyDiagEx}
Consider a sequence of ideal triangulations $ \lambda^{(0)}$, $ \lambda^{(1)}$, \dots, $ \lambda^{(k)}$, where each $\lambda^{(i+1)}$ is obtained from $\lambda^{(i)}$ by a diagonal exchange. Suppose in addition that we are given  irreducible representations $\rho_{\lambda^{(i)}} \col \T_{\lambda^{(i)}} \to \End(V)$ such that $\rho_{\lambda^{(i+1)}}= \rho_{\lambda^{(i)}} \circ \Phi_{\lambda^{(i)} \lambda^{(i+1)}}^q$ for every $i$. Then, $\rho_{\lambda^{(k)}}= \rho_{\lambda^{(0)}} \circ \Phi_{\lambda^{(0)} \lambda^{(k)}}^q$. \qed
\end{prop}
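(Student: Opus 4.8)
My plan is to argue by induction on $k$, the case $k=1$ being exactly the hypothesis. The real content is to upgrade the \emph{ad hoc} construction of $\rho\circ\Phi_{\lambda\lambda'}^q$ preceding Figure~\ref{fig:DiagEx} into a single partially defined homomorphism on fractions, and then to check that this partial homomorphism is transported correctly by the coordinate change isomorphisms. The cocycle identity $\Phi_{\lambda\lambda''}^q = \Phi_{\lambda\lambda'}^q \circ \Phi_{\lambda'\lambda''}^q$ of Theorem~\ref{thm:CoordChanges} will then do the rest, once the relevant domains are shown to match up.

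First I would, for each representation $\rho\col \T_\lambda^q \to \End(V)$, introduce the subset $\mathcal D_\rho \subset \widehat\T_\lambda^q$ of elements admitting a presentation $PQ^{-1} = (Q')^{-1}P'$ with $P,Q,P',Q' \in \T_\lambda^q$ and $\rho(Q)$, $\rho(Q')$ invertible. The same verification already sketched in the text shows that $\mathcal D_\rho$ is a subalgebra of $\widehat\T_\lambda^q$ and that $PQ^{-1} \mapsto \rho(P)\rho(Q)^{-1}$ gives a well-defined algebra homomorphism $\widehat\rho\col \mathcal D_\rho \to \End(V)$. In this language, ``$\rho\circ\Phi_{\lambda\lambda'}^q$ makes sense'' means exactly $\Phi_{\lambda\lambda'}^q(\T_{\lambda'}^q) \subset \mathcal D_\rho$, and then $\rho\circ\Phi_{\lambda\lambda'}^q = \widehat\rho\circ\Phi_{\lambda\lambda'}^q$ on $\T_{\lambda'}^q$.

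The key step I would prove is a compatibility statement for a single diagonal exchange: if $\lambda$ and $\lambda'$ differ by a diagonal exchange and $\rho' = \rho\circ\Phi_{\lambda\lambda'}^q$ makes sense, then in fact $\Phi_{\lambda\lambda'}^q(\mathcal D_{\rho'}) \subset \mathcal D_\rho$ and $\widehat{\rho'} = \widehat\rho\circ\Phi_{\lambda\lambda'}^q$ throughout $\mathcal D_{\rho'}$. The mechanism is that for $Y = P'Q'^{-1} \in \mathcal D_{\rho'}$ with $\rho'(Q')$ invertible, the hypothesis already places $\Phi_{\lambda\lambda'}^q(P')$ and $\Phi_{\lambda\lambda'}^q(Q')$ in $\mathcal D_\rho$, with $\widehat\rho(\Phi_{\lambda\lambda'}^q(Q')) = \rho'(Q')$ invertible; writing $\Phi_{\lambda\lambda'}^q(Q') = CD^{-1}$ with $\rho(D)$ invertible forces $\rho(C) = \widehat\rho(\Phi_{\lambda\lambda'}^q(Q'))\,\rho(D)$ invertible, so $\Phi_{\lambda\lambda'}^q(Q')^{-1} = DC^{-1}$ lies in $\mathcal D_\rho$ as well, whence $\Phi_{\lambda\lambda'}^q(Y)$ belongs to the subalgebra $\mathcal D_\rho$ with the expected image under $\widehat\rho$. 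Granting this, I would apply it to each step $\lambda^{(i)} \to \lambda^{(i+1)}$, compose the resulting inclusions $\Phi_{\lambda^{(i)}\lambda^{(i+1)}}^q(\mathcal D_{\rho_{\lambda^{(i+1)}}}) \subset \mathcal D_{\rho_{\lambda^{(i)}}}$ using the cocycle identity, and conclude $\widehat\rho_{\lambda^{(k)}} = \widehat\rho_{\lambda^{(0)}}\circ\Phi_{\lambda^{(0)}\lambda^{(k)}}^q$ on $\mathcal D_{\rho_{\lambda^{(k)}}}$; restricting to $\T_{\lambda^{(k)}}^q \subset \mathcal D_{\rho_{\lambda^{(k)}}}$ is precisely the assertion that $\rho_{\lambda^{(0)}}\circ\Phi_{\lambda^{(0)}\lambda^{(k)}}^q$ makes sense and equals $\rho_{\lambda^{(k)}}$.

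The step I expect to be the main obstacle is the compatibility statement of the previous paragraph — this is exactly the ``control of the invertibility of the images of denominators'' alluded to just after the statement. The delicate points are that inverses of $\mathcal D_\rho$-elements with invertible $\widehat\rho$-image must be shown to stay inside $\mathcal D_\rho$, and that the left and right Ore fraction presentations must be reconciled so that $\widehat\rho$ is genuinely single-valued and multiplicative on $\mathcal D_\rho$; this is where the Ore Condition and the explicit diagonal-exchange denominators underlying Proposition~\ref{prop:DiagExWeights} would have to be handled with care.
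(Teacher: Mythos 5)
Your overall architecture — package the \emph{ad hoc} definition into a partial homomorphism $\widehat\rho$ on a domain $\mathcal D_\rho\subset\widehat\T_\lambda^q$, prove a one-step transport statement $\Phi_{\lambda\lambda'}^q(\mathcal D_{\rho'})\subset\mathcal D_\rho$ with $\widehat{\rho'}=\widehat\rho\circ\Phi_{\lambda\lambda'}^q$, then induct along the chain using the cocycle identity of Theorem~\ref{thm:CoordChanges} — is the right one, and it matches the strategy of the source this paper quotes: the paper gives no proof of Proposition~\ref{prop:ManyDiagEx} itself, deferring entirely to \cite[Lemmas 25 and 26]{BonLiu}, and explicitly warns just above the statement that the extension requires ``significantly more work to control the invertibility of the images of denominators.'' Your sub-argument that $\Phi_{\lambda\lambda'}^q(Q')^{-1}$ stays in $\mathcal D_\rho$ is correct (and complete once you use both one-sided presentations $CD^{-1}=(D'')^{-1}C''$, not just the right one: then $\rho(C)$ and $\rho(C'')$ are both invertible and $\Phi_{\lambda\lambda'}^q(Q')^{-1}=DC^{-1}=(C'')^{-1}D''$ has the required two-sided form).

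The genuine gap is your foundational claim that $\mathcal D_\rho$ is a subalgebra and that $\widehat\rho$ is a single-valued algebra homomorphism on it, which you assert follows from ``the same verification already sketched in the text.'' It does not. The easy check in the text concerns one element at a time: $PQ^{-1}=(Q')^{-1}P'$ is equivalent to the polynomial identity $Q'P=P'Q$ in $\T_\lambda^q$, which after applying $\rho$ gives well-definedness of the value; and multiplicativity of $\rho\circ\Phi_{\lambda\lambda'}^q$ on $\T_{\lambda'}^q$ is available only because the ``makes sense'' hypothesis already places \emph{every} element of $\Phi_{\lambda\lambda'}^q(\T_{\lambda'}^q)$, including products, in $\mathcal D_\rho$ with prescribed presentations. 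Closure of $\mathcal D_\rho$ itself under products is not formal: for $X_1=P_1Q_1^{-1}=(Q_1')^{-1}P_1'$ and $X_2=P_2Q_2^{-1}=(Q_2')^{-1}P_2'$ one only gets the middle presentation $X_1X_2=P_1\,(Q_2'Q_1)^{-1}P_2'$, and rewriting this as $PQ^{-1}$ or $(Q')^{-1}P'$ invokes the Ore condition, which produces new denominators over whose $\rho$-invertibility you have no control; the same problem afflicts common denominators for sums. So the very step on which your compatibility lemma leans ($\Phi(P')\Phi(Q')^{-1}$ ``belongs to the subalgebra $\mathcal D_\rho$'') is exactly the flagged difficulty, assumed rather than proved — you locate the danger only in the transport step, but it already sits under your second paragraph. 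A symptom of the missing idea is that your argument never uses irreducibility, which is a hypothesis of the Proposition and of the cited lemmas. Irreducibility is what tames the Ore problem: since $q$ is a root of unity, $\T_\lambda^q$ is finite over its center (the $X_i^{\pm N}$ are central), so $\widehat\T_\lambda^q$ is finite-dimensional over the fraction field of the center and every fraction admits a presentation $Wz^{-1}=z^{-1}W$ with \emph{central} denominator $z$; for irreducible $\rho$, $\rho(z)$ is a scalar, hence either zero or invertible, which restores control over denominators — alternatively one can control the specific denominators $(1+q^{2k+1}X_i)$ of the diagonal-exchange formulas spectrally via the classification of Theorem~\ref{thm:IrredRepClass}, as in the estimates behind Proposition~\ref{prop:DiagExWeights}. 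Without an input of this kind, your induction does not close.
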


\section{Geometric data}
\label{sect:CrossRatios}

From now on, we will identify $S$ with the complement $\widehat \C -\{ v_1, v_2, \dots, v_r \}$  of $r$ distinct points $v_1$, $v_2$, \dots $v_r$ in the Riemann sphere $\widehat \C = \C \cup \{ \infty \}$. Namely, we choose a conformal structure on $S$.

An ideal triangulation $\lambda$ is \emph{simple} if every edge of $\lambda$ joins two distinct punctures of $S$. 

The conformal structure of $S$ then defines a system on complex edge weights on $\lambda$ defined as follows. Choose an arbitrary orientation for the edge $\lambda_i$, so that it goes from the puncture $v_-$ to the puncture $v_+\in \widehat \C$. Let $T_{\mathrm{left}}$ and $T_{\mathrm{right}}$ be the two faces of $\lambda$ containing $\lambda_i$ and respectively located to the left and to the right of $\lambda_i$ for the orientation specified, and let $v_{\mathrm{left}}$ and $v_{\mathrm{right}}$ be the vertices of these two triangles that are different from $v_-$ and $v_+$. See  Figure~\ref{fig:DiagEx}. Then, the weight $x_i \in \C^*$ is defined as the cross-ratio
$$
x_i = -\frac{(v_+ - v_{\mathrm{left}})(v_- - v_{\mathrm{right}})}{(v_+ - v_{\mathrm{right}})(v_- - v_{\mathrm{left}})}.
$$

In the case of a diagonal exchange, an elementary computation provides:

\begin{lem}
\label{lem:DiagExCrossRatio}
Let the simple ideal triangulations $\lambda$ and $\lambda'$ differ by an embedded diagonal exchange as in Figure~\ref{fig:DiagEx}, with the edge indexing shown there. Then the above edge weights satisfy:
\begin{align*}
x'_1 &= x_1 ^{-1}\not =-1\\
x_2' &= (1+x_1) x_2\\
x_3' &= (1 + x_1^{-1})^{-1} x_3\\
x_4' &= (1+x_1) x_4\\
x_5' &= (1 + x_1^{-1})^{-1} x_5.\\
x_i' &= x_i \text{ if } i \not= 1, 2, 3, 4, 5.
\end{align*}
\vskip -\belowdisplayskip 
\vskip - \baselineskip \qed
\end{lem}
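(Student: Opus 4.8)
The plan is to reduce the whole statement to a single algebraic identity among the four vertices of the square carrying the diagonal exchange, combined with careful bookkeeping of which adjacent face of each edge is modified. Writing $v_{\mathrm{left}}, v_-, v_{\mathrm{right}}, v_+ \in \widehat\C$ for the four vertices of that square, and $w_2, w_3, w_4, w_5$ for the ``outer'' third vertices of the faces of $\lambda$ adjacent to $\lambda_2,\lambda_3,\lambda_4,\lambda_5$ but lying outside the square, I would first record that the weight formula is invariant under reversing the orientation of an edge: a reversal exchanges $v_-\leftrightarrow v_+$ and $v_{\mathrm{left}}\leftrightarrow v_{\mathrm{right}}$ simultaneously, which merely reorders the factors in the cross-ratio. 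This frees me from tracking a preferred orientation. Everything then rests on the elementary identity
$$
(v_+ - v_{\mathrm{right}})(v_- - v_{\mathrm{left}}) - (v_+ - v_{\mathrm{left}})(v_- - v_{\mathrm{right}}) = (v_+ - v_-)(v_{\mathrm{right}} - v_{\mathrm{left}}),
$$
which, after dividing by $(v_+ - v_{\mathrm{right}})(v_- - v_{\mathrm{left}})$, realizes $1+x_1$ and $1+x_1^{-1}$ as single cross-ratio monomials.

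For the diagonal $\lambda_1$ itself, the exchange replaces the diagonal $v_- v_+$ by $v_{\mathrm{left}} v_{\mathrm{right}}$, so the pairs $(v_-,v_+)$ and $(v_{\mathrm{left}},v_{\mathrm{right}})$ swap roles in the defining cross-ratio; a direct substitution gives $x_1' = x_1^{-1}$. The inequality $x_1'\neq -1$ is equivalent to $x_1\neq -1$; since the cross-ratio defining $x_1$ equals $-x_1$ and takes the value $1$ only when $v_+=v_-$ or $v_{\mathrm{left}}=v_{\mathrm{right}}$, it suffices to note that all four vertices are distinct. Indeed, simplicity of $\lambda$ forces $v_-\neq v_+$, and simplicity of $\lambda'$ forces $v_{\mathrm{left}}\neq v_{\mathrm{right}}$.

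For each of the four outer edges, the key observation is that the exchange alters exactly one of its two adjacent faces, namely the one inside the square, while the outer face with third vertex $w_i$ is untouched. Concretely, the inner third vertex of $\lambda_2$ changes from $v_-$ to $v_{\mathrm{right}}$, that of $\lambda_4$ from $v_+$ to $v_{\mathrm{left}}$, that of $\lambda_3$ from $v_-$ to $v_{\mathrm{left}}$, and that of $\lambda_5$ from $v_+$ to $v_{\mathrm{right}}$. In each ratio $x_i'/x_i$ the factors involving $w_i$ cancel, leaving a cross-ratio in $v_{\mathrm{left}}, v_-, v_{\mathrm{right}}, v_+$ alone; the identity above then identifies it with $1+x_1$ for $\lambda_2,\lambda_4$ and with $(1+x_1^{-1})^{-1}$ for $\lambda_3,\lambda_5$. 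Finally, any edge $\lambda_i$ with $i\neq 1,2,3,4,5$ has both adjacent faces disjoint from the exchange, so $x_i'=x_i$.

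The computations are entirely routine, and the result is of course consistent with the representation-theoretic transformation of Proposition~\ref{prop:DiagExWeights}. The only place demanding care — and hence the main, if modest, obstacle — is the sign and left/right bookkeeping that decides, for each outer edge, whether the surviving cross-ratio is $1+x_1$ or its ``dual'' $(1+x_1^{-1})^{-1}$. The orientation-invariance remark and the single algebraic identity are precisely what keep this bookkeeping under control.
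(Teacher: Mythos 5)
Your proposal is correct, and it follows exactly the route the paper intends: the paper states this lemma with no written proof (it is introduced as ``an elementary computation provides'' and closed with a \qed), and your argument is a careful execution of that direct cross-ratio verification, with the identity $(v_+ - v_{\mathrm{right}})(v_- - v_{\mathrm{left}}) - (v_+ - v_{\mathrm{left}})(v_- - v_{\mathrm{right}}) = (v_+ - v_-)(v_{\mathrm{right}} - v_{\mathrm{left}})$ serving as a clean organizing device. Your tracking of the inner third vertices, the use of simplicity of $\lambda$ and $\lambda'$ to get $x_1 \neq -1 \neq x_1'$, and the implicit use of embeddedness to ensure each outer edge has exactly one face altered are all sound.
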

Note the similarity with the formulas of Proposition~\ref{prop:DiagExWeights}. 

In the same way, around a puncture:
\begin{lem}
\label{lem:CrossRatioPuncture}
If $\lambda_{i_1}$, $\lambda_{i_2}$, \dots, $\lambda_{i_k}$ are the edges adjacent to the $j$--th puncture of $S$, then the above cross-ratios are such that 
$$
x_{i_1} x_{i_2} \dots x_{i_k} =1.
$$
\vskip - \belowdisplayskip 
\vskip -\baselineskip \qed
\end{lem}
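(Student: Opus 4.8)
The plan is to reduce the statement to the combinatorics of the \emph{link} of the $j$-th puncture $v = v_j$. Since $\lambda$ is simple, no edge has both endpoints at $v$, so the edges and face-corners meeting $v$ form a single cyclic sequence $\lambda_{i_1}, T_1, \lambda_{i_2}, T_2, \dots, \lambda_{i_k}, T_k$, where $T_m$ is the triangle lying between $\lambda_{i_m}$ and $\lambda_{i_{m+1}}$ (indices read cyclically mod $k$). Writing $w_m \in \widehat\C$ for the endpoint of $\lambda_{i_m}$ other than $v$, the triangle $T_m$ has vertices $v$, $w_m$, $w_{m+1}$. First I would observe that the weight attached to an edge does not depend on the chosen orientation: reversing it exchanges $v_+ \leftrightarrow v_-$ and simultaneously $v_{\mathrm{left}} \leftrightarrow v_{\mathrm{right}}$, which leaves the defining cross-ratio unchanged. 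I can therefore orient every $\lambda_{i_m}$ away from $v$, so that $v_- = v$ and $v_+ = w_m$, with opposite vertices $w_{m-1}$ and $w_{m+1}$.

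With these conventions each weight becomes a cross-ratio of the four points $v, w_{m-1}, w_m, w_{m+1}$, of the shape
$$
x_{i_m} = -\frac{(w_m - w_{m+1})(v - w_{m-1})}{(w_m - w_{m-1})(v - w_{m+1})},
$$
the precise left/right assignment only permuting $w_{m-1} \leftrightarrow w_{m+1}$ and hence, as the computation below shows, not affecting the final value. Taking the product over the full cycle $m = 1, \dots, k$, the factors involving $v$ cancel, since $\prod_m (v - w_{m-1})$ and $\prod_m (v - w_{m+1})$ both equal $\prod_m (v - w_m)$ after reindexing. The remaining factors telescope: the cyclic shift $m \mapsto m+1$ gives $\prod_m (w_m - w_{m-1}) = (-1)^k \prod_m (w_m - w_{m+1})$, so that the ratio $\prod_m (w_m - w_{m+1}) \big/ \prod_m (w_m - w_{m-1})$ equals $(-1)^k$. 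Combined with the overall sign $(-1)^k$ coming from the $k$ explicit minus signs, this yields $\prod_{m=1}^k x_{i_m} = (-1)^k (-1)^k = 1$, as claimed.

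The computation is elementary, so the only genuine points to watch are bookkeeping ones. I would double-check that the cyclic left/right labeling is globally consistent around $v$ (this is exactly where simplicity of $\lambda$ is used, guaranteeing that each edge and each triangle-corner occurs exactly once in the link), and confirm that the two independent sources of sign $(-1)^k$ really do cancel to give $+1$ rather than $(-1)^k$. The hard part, such as it is, is therefore the sign bookkeeping rather than any substantive geometry. Finally, to cover the degenerate case in which $v$ or one of the $w_m$ equals $\infty$, I would invoke the Möbius invariance of the cross-ratio defining the edge weights: applying a Möbius transformation that moves all the relevant punctures to finite points changes none of the $x_i$, so it suffices to treat the case of finite points, where the displayed manipulation is literally valid.
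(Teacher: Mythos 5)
Your proof is correct: the paper states this lemma without proof, presenting it as an elementary computation, and your argument is precisely the intended one. Orienting all edges away from the puncture, writing each weight as a cross-ratio of $v, w_{m-1}, w_m, w_{m+1}$, and letting the factors involving $v$ cancel while the remaining factors telescope cyclically (with the two signs $(-1)^k$ cancelling) is the standard computation, and your side remarks --- orientation-independence of the weights, the reciprocal ambiguity from the left/right convention being harmless since $1^{-1}=1$, simplicity guaranteeing the single cyclic link, and M\"obius invariance handling the point at infinity --- dispose of all the genuine subtleties.
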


We can use these edge weights $x_i$ to construct representations of the Chekhov-Fock algebras attached to simple ideal triangulations of the punctured sphere $S = \widehat \C -\{ v_1, v_2, \dots, v_r \}$.

For each puncture $v_i$, choose an $N$--root of unity $p_j = q^{2n_j}$. Then choose a square root $h=\sqrt[2]{p_1p_2 \dots p_r}$. If $\lambda$ is a simple ideal triangulation of $S$, Theorem~\ref{thm:IrredRepClass} and Lemma~\ref{lem:CrossRatioPuncture} then provide an irreducible representation $\rho\col \T_\lambda^q \to \End(V)$, associated to the cross-ratio edge weights $x_i$, the $N$--roots $p_j$ and the square root $h$. 

If we combine Proposition~\ref{prop:DiagExWeights}, Proposition~\ref{prop:ManyDiagEx} and Lemma~\ref{lem:DiagExCrossRatio}, we then obtain:

\begin{lem}
\label{lem:ManySimpleDiagEx}
Let $\rho\col \T_\lambda^q \to \End(V)$ and $\rho'\col \T_{\lambda'}^q \to \End(V')$ be irreducible representations associated to the simple ideal triangulations $\lambda$ and $\lambda'$ as above. Suppose that $\lambda$ and $\lambda'$ can be connected by a sequence of simple ideal triangulations $ \lambda^{(0)}$, $ \lambda^{(1)}$, \dots, $ \lambda^{(k)}$, where each $\lambda^{(i+1)}$ is obtained from $\lambda^{(i)}$ by an embedded diagonal exchange. Then the representation $\rho \circ \Phi_{\lambda\lambda'}^q$ makes sense, and is isomorphic to $\rho'$. \qed

\end{lem}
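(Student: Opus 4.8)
The plan is to run an induction along the connecting sequence $\lambda^{(0)}, \lambda^{(1)}, \dots, \lambda^{(k)}$, tracking at each stage the representation obtained by composing with the coordinate change isomorphisms. Setting $\rho^{(0)} = \rho$, I would define $\rho^{(i+1)} = \rho^{(i)} \circ \Phi_{\lambda^{(i)}\lambda^{(i+1)}}^q$ inductively, and maintain the inductive hypothesis that $\rho^{(i)}$ is a well-defined irreducible representation on $V$, classified in the sense of Theorem~\ref{thm:IrredRepClass} by the cross-ratio edge weights of $\lambda^{(i)}$, the puncture weights $p_j$, and the global charge $h$. The base case holds because $\rho = \rho^{(0)}$ is by hypothesis the representation associated to $\lambda^{(0)} = \lambda$ via the cross-ratio construction, with the fixed data $p_j$ and $h$.

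For the inductive step, the embedded diagonal exchange from $\lambda^{(i)}$ to $\lambda^{(i+1)}$ is precisely the setting of both Proposition~\ref{prop:DiagExWeights} and Lemma~\ref{lem:DiagExCrossRatio}. Lemma~\ref{lem:DiagExCrossRatio} tells me that the cross-ratio weight $x_1$ of the exchanged edge satisfies $x_1^{-1} \neq -1$, hence $x_1 \neq -1$, so the hypothesis of Proposition~\ref{prop:DiagExWeights} is met. Proposition~\ref{prop:DiagExWeights} then guarantees that $\rho^{(i+1)} = \rho^{(i)} \circ \Phi_{\lambda^{(i)}\lambda^{(i+1)}}^q$ makes sense, is an irreducible representation on $V$, and is classified by the same $p_j$, the same $h$, and the transformed edge weights $x_i'$. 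The crucial observation is that these transformed weights are computed by exactly the same formulas in Proposition~\ref{prop:DiagExWeights} and in Lemma~\ref{lem:DiagExCrossRatio}; since the latter records how the geometric cross-ratio weights change under the exchange, the edge weights of $\rho^{(i+1)}$ coincide with the cross-ratio weights of $\lambda^{(i+1)}$. This closes the induction.

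With the chain $\rho^{(0)}, \dots, \rho^{(k)}$ in hand, satisfying $\rho^{(i+1)} = \rho^{(i)} \circ \Phi_{\lambda^{(i)}\lambda^{(i+1)}}^q$ by construction, I would invoke Proposition~\ref{prop:ManyDiagEx} to conclude $\rho^{(k)} = \rho^{(0)} \circ \Phi_{\lambda^{(0)}\lambda^{(k)}}^q = \rho \circ \Phi_{\lambda\lambda'}^q$; in particular $\rho \circ \Phi_{\lambda\lambda'}^q$ makes sense. By the induction, $\rho^{(k)}$ is classified by the cross-ratio weights of $\lambda' = \lambda^{(k)}$, the puncture weights $p_j$, and the global charge $h$, which is exactly the classifying data of $\rho'$. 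Theorem~\ref{thm:IrredRepClass} then yields $\rho \circ \Phi_{\lambda\lambda'}^q = \rho^{(k)} \cong \rho'$.

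The argument is essentially an assembly of the three cited results, so I expect no serious obstacle. The one point demanding care is the inductive hypothesis itself: I must verify at every stage that the algebraically-defined edge weights produced by Proposition~\ref{prop:DiagExWeights} coincide with the geometric cross-ratio weights of the next triangulation, and that each intermediate $\lambda^{(i)}$ remains simple so that its cross-ratio data satisfies the constraint $x_{i_1} x_{i_2} \dots x_{i_k} = 1$ of Lemma~\ref{lem:CrossRatioPuncture} and hence genuinely defines an irreducible representation. This coincidence of the two formula-sets, which is exactly why Lemma~\ref{lem:DiagExCrossRatio} is stated in a form parallel to Proposition~\ref{prop:DiagExWeights}, is what makes the telescoping work; the simplicity of each $\lambda^{(i)}$ is supplied by the hypothesis that the connecting diagonal exchanges pass only through simple triangulations.
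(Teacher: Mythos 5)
Your proof is correct and follows exactly the route the paper intends: the lemma is stated there without a separate argument, as the immediate combination of Proposition~\ref{prop:DiagExWeights}, Proposition~\ref{prop:ManyDiagEx} and Lemma~\ref{lem:DiagExCrossRatio}, which is precisely your induction along the sequence plus the telescoping via Proposition~\ref{prop:ManyDiagEx} and the final appeal to the classification in Theorem~\ref{thm:IrredRepClass}. Your explicit verification that $x_1\neq -1$ follows from Lemma~\ref{lem:DiagExCrossRatio}, and that the algebraic weight-change formulas coincide with the geometric cross-ratio formulas at each step, simply makes precise what the paper leaves implicit.
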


%%%%%%%%%%%%%%%%%%%
\section{Rivin triangulations}

In order to apply Lemma~\ref{lem:ManySimpleDiagEx}, we will restrict the type of ideal triangulations that we consider.

An example of an ideal cell decomposition is provided by a convex polyhedron $C$ with $r$ vertices in $\R^3$. If we choose a homeomorphism between $S$ and the complement of the vertices in the boundary $\partial C$,  the pre-images of the edges of  $C$ form an ideal cell decomposition of $S$. 
A famous theorem of Steinitz (see for instance \cite{Grunbaum}) asserts that an ideal cell decomposition is obtained in this way if and only if each edge of $\lambda$ joins different punctures, if no two distinct edges join the same punctures,  and if there is no curve in $S$ which cuts $\lambda$ in exactly two points contained in different edges.

An ideal cell decomposition $\lambda$ of $S$ satisfies the \emph{Rivin Condition}, or is a \emph{Rivin cell decomposition}, if it comes from a convex polyhedron $C$ which, in addition, is inscribed in the sphere $\mathbb S^2$. More precisely, $\lambda$ satisfies the Rivin condition if there exists $v_1$, $v_2$, \dots, $v_r \in \mathbb S^2$ and a homeomorphism between $S$ and $\partial C- \{ v_1, v_2, \dots v_r\}$, where $C$ is the convex hull of $ \{ v_1, v_2, \dots v_r\}$ in $\R^3$, for which $\lambda$ corresponds to the edges of the polyhedron $C$. We will allow the case where the points $v_1$, $v_2$, \dots, $v_r$ are coplanar, in which case the convex hull $C$ is a flat polygon, and $\lambda$ consists of a chain of $r$ edges subdividing $S$ into two ideal $r$--gons. 

From our point of view, a fundamental property of Rivin cell decompositions is that they are simple, in the sense that each edge joins two distinct punctures of $S$.

Igor Rivin \cite{Riv1, Riv2, Riv3, Riv4} (see also \cite{Gueritaud}) showed that the Rivin condition is equivalent to the existence of a system of real weights on the edges of $\lambda$ satisfying a certain number of linear equalities and inequalities. In particular, for a given ideal cell decomposition, the validity of this property can be decided by a linear programming algorithm. The simplest example of an ideal triangulation that does not satisfies the Rivin Condition is provided by the convex polyhedron obtained by gluing a triangular pyramid with low height on each of the faces of a tetrahedron. The corresponding ideal triangulation of the 6--times punctured sphere is by construction realized by a convex polyhedron, but can be shown to be irrealizable by a convex polyhedron inscribed in the sphere.

\begin{lem}
\label{lem:DiagEx}
If $r\geq 5$, any two Rivin triangulations $\lambda$, $\lambda'$ of the $r$--times punctured sphere can be connected by a sequence of Rivin triangulations $\lambda = \lambda^{(0)}$, $ \lambda^{(1)}$, \dots, $ \lambda^{(l)}= \lambda'$, where any $\lambda^{(i)}$ is obtained from $\lambda^{(i-1)}$ by an embedded diagonal exchange. 
\end{lem}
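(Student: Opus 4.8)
The plan is to deduce the statement from Rivin's parametrization of inscribed polyhedra, viewing the Rivin cell decompositions as the cells of a decomposition of the Teichm\"uller space $\mathcal{T}(S)$. Following \cite{Riv1, Riv2, Riv3, Riv4} (and the exposition in \cite{Gueritaud}), the Rivin condition on an ideal cell decomposition $\lambda$ is equivalent to the solvability of a system of linear equalities and inequalities on edge weights, and the resulting solution sets fit together into a cell complex homeomorphic to $\mathcal{T}(S) \cong \mathbb{R}^{2r-6}$. In this decomposition, the top-dimensional cells are indexed exactly by the Rivin triangulations, while a codimension-one cell is indexed by a Rivin cell decomposition $\mu$ having a single quadrilateral face, all of whose other faces are triangles; the two top cells in the closure of this codimension-one cell are precisely the two Rivin triangulations obtained from $\mu$ by inserting one of the two diagonals of that quadrilateral.

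First I would make this wall-crossing picture precise: each codimension-one cell of the Rivin decomposition lies in the closure of exactly two top-dimensional cells, so the decomposition is a pseudomanifold away from codimension two. Since $\mathcal{T}(S)$ is connected, a standard argument then shows that the dual graph---whose vertices are the Rivin triangulations and whose edges are the codimension-one walls---is connected. Concretely, the closed top cells cover the connected space $\mathcal{T}(S)$, and any two of them can be joined by a path meeting the codimension $\geq 2$ skeleton transversally, hence crossing only walls; reading off the sequence of top cells visited produces a sequence $\lambda = \lambda^{(0)}, \lambda^{(1)}, \dots, \lambda^{(l)} = \lambda'$ of Rivin triangulations in which consecutive terms differ by the diagonal exchange associated with the common wall.

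It remains to verify that each of these diagonal exchanges is embedded, which is where the hypothesis $r \geq 5$ enters. The quadrilateral face of the intermediate decomposition $\mu$ is a genuine quadrilateral face of the convex polyhedron inscribed in $\mathbb S^2$, so its four corners are four \emph{distinct} punctures of $S$. The Steinitz conditions satisfied by every Rivin cell decomposition guarantee that no two distinct edges join the same pair of punctures and that there is no curve meeting $\mu$ in two points lying on distinct edges; consequently the four sides of the quadrilateral are four distinct edges with no outside gluing between them, which is exactly the requirement that the diagonal exchange be embedded. For $r = 4$ this already fails at the level of triangulations: the only simple triangulation is the boundary of a tetrahedron, every flip of which would create a doubled edge, so no embedded diagonal exchange is available at all---this is why the statement is restricted to $r \geq 5$.

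The main obstacle is the wall-crossing analysis of the first step: establishing that the codimension-one strata of the Rivin decomposition are exactly the one-quadrilateral decompositions, each adjacent to exactly two Rivin triangulations, and that no intermediate decomposition leaves the Rivin complex. Granting Rivin's cell-decomposition theorem, this reduces to a local analysis near a wall, where merging the two triangles across a single edge is the only codimension-one degeneration compatible with inscribability; the connectedness of $\mathcal{T}(S)$ and the embeddedness check are then comparatively routine.
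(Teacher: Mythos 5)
You have reconstructed the right skeleton---connectedness of Teichm\"uller space, a generic path crossing codimension-one walls one at a time, each crossing effecting a flip in a quadrilateral face---and this is indeed the paper's strategy. But your proposal defers its entire substance to ``Rivin's cell-decomposition theorem,'' and that is a genuine gap: what you grant is exactly what has to be proved. Rivin's results \cite{Riv1, Riv2, Riv3, Riv4} characterize which individual cell decompositions are inscribable, via linear equalities and inequalities; they do not hand you the global chamber structure on $\T(S)$, the identification of the walls with one-quadrilateral decompositions, or the exactly-two-chambers-per-wall property. The paper avoids needing any of this by arguing directly on point configurations: it defines $\lambda_{[f]}$ as the pullback of the edge pattern of the convex hull of $v_1,\dots,v_r\in\mathbb S^2$ (well-defined on $\T(S)$ because conformal transformations of $\mathbb S^2$ extend to projective transformations of the ball, which send convex hulls to convex hulls), joins $[f]$ to $[f']$ by a path in the connected space $\T(S)$, and puts the track $t\mapsto(v_1^{(t)},\dots,v_r^{(t)})\in(\mathbb S^2)^r$ in general position, transverse to the codimension-one stratum where exactly four points are coplanar. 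Away from that stratum the hull is simplicial and its combinatorics are locally constant; at a crossing, either the four coplanar vertices do not span a face (no change) or they span a quadrilateral face and the triangulation changes by a diagonal exchange there. To complete your version you would have to carry out this local analysis yourself rather than cite it---and once you do, the detour through a global cell decomposition of $\T(S)$ is unnecessary.

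You also misplace the hypothesis $r\geq 5$. It is not really needed for embeddedness: once the hull is $3$-dimensional, its boundary is embedded in $\R^3$, so the four sides of a quadrilateral face are automatically four distinct edges with no outside gluings (your Steinitz argument is fine but redundant). What $r\geq 5$ actually buys, and where the paper invokes it, is that the hull \emph{remains $3$-dimensional} at the moment four vertices become coplanar, so that the wall carries a single quadrilateral face; for $r=4$ the hull flattens at the wall, and crossing it performs a double diagonal exchange through a non-Rivin decomposition. Accordingly, your claim that for $r=4$ ``no embedded diagonal exchange is available at all'' is false and contradicts the discussion around Lemma~\ref{lem:DbleDiagEx}: flips of the tetrahedral triangulation are embedded and yield simple ideal triangulations; they merely leave the class of Rivin triangulations, since the resulting doubled edge violates the Steinitz conditions.
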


\begin{proof}
Consider the (cusped) Teichm\"uller space $\T(S)$. An element of $\T(S)$ consists of a homeomorphism $f\col S \to \mathbb S^2 - \{ v_1, v_2, \dots, v_r\}$ between $S$ and the complement of $r$ points in the unit sphere $\mathbb S^3$, considered up to precomposition by an isotopy of $S$ and up to postcomposition by a conformation transformation of $\mathbb S^2$.

If $[f]\in \T(S)$ is represented by the homeomorphism $f\col S \to \mathbb S^2 - \{ v_1, v_2, \dots, v_r\}$, we can consider the convex hull $C$ of the points $v_1$, $v_2$, \dots, $v_r$ in $\R^3$. The closest point projection can be perturbed to a homeomorphism $\mathbb S^2 \to \partial C$, well-defined up to isotopy fixing $\{v_1, v_2, \dots, v_r\}$. For this identification, the edges and faces of $C$ provide an ideal cell decomposition of  $\mathbb S^2 - \{ v_1, v_2, \dots, v_r\}$, which can be pulled back to an ideal cell decomposition $\lambda_{[f]}$ of $S$ by the homeomorphism $f$. 

As before, when the $r$ points $v_1$, $v_2$, \dots, $v_r$ are coplanar, $\lambda_{[f]}$ consists of $r$ edges dividing $S$ into two ideal $r$--gons. 

Every conformal transformation of $\mathbb S^2$ has a unique extension to a projective transformation of the ball $\mathbb B^3$ bounded by $\mathbb S^2$, as in the projective model for the hyperbolic 3--space. Since this projective extension sends convex hull to convex hull, it follows that                                                                                                                                                                                                                                                                                                                                                                                                                                                                                                                                                                                                                                                   the ideal cell decomposition $\lambda_{[f]}$ of $S$ depends only on the class $[f]\in \T(S)$, up to isotopy. 

Let $\lambda$ and $\lambda'$ be two Rivin triangulations. By definition, there exists $[f]$, $[f']\in\T(S)$ such that $\lambda_{[f]} = \lambda$ and $\lambda_{f']}=\lambda'$. Since the Teichm\"uller space $\T(S)$ is connected, we can join these two elements by a path $t\mapsto [f_t]\in \T(S)$, $0\leq t\leq 1$, where the homeomorphism $f_t \col S \to \mathbb S^2 - \{v_1^{(t)}, v_2^{(t)}, \dots, v_r^{(t)} \}$ representing $[f_t]$ is such that $f_0=f$ and $f_1=f'$. By general position, we can arrange that the map $t \mapsto (v_1^{(t)}, v_2^{(t)}, \dots, v_r^{(t)})\in (\mathbb S^2)^r$ is transverse to the codimension 1 stratum where exactly four of these vertices $v_i^{(t)}$  are coplanar. 

Outside of the values of $t$ where four vertices become coplanar in this way, the ideal cell decomposition $\lambda_{[f_{t}]}$ is an ideal triangulation, and is independent of $t$ under small perturbation. 

As one crosses this codimension 1 stratum at $t=t_0$,  four vertices become coplanar, but the convex hull of the $v_i^{(t)}$ remain 3--dimensional by our hypothesis that $r\geq 5$. If the four coplanar vertices are not contained in the same face of $\lambda_{[f_{t_0}]}$, the ideal triangulation $\lambda_{[f_{t}]}$ remains unchanged as $t$ crosses $t_0$. Otherwise, $\lambda_{[f_{t}]}$ changes by an embedded diagonal exchange in the square face containing these vertices. 
\end{proof}

When $r=4$, any Rivin triangulation is combinatorially equivalent to a tetrahedron, with three edges arriving at each puncture. In this case, the  convex hull becomes flat as one crosses the codimension 1 sratum. The same proof then shows that any two Rivin triangulations of the 4--times punctured sphere can be joined by a sequence of  \emph{double diagonal exchanges} illustrated in Figure~\ref{fig:DoubleDiagEx}.

\begin{figure}[htbp]
\includegraphics{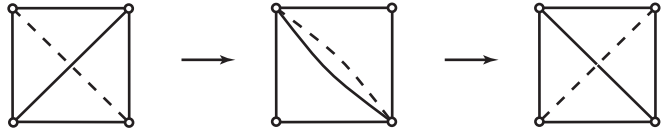}
\caption{A double diagonal exchange}
\label{fig:DoubleDiagEx}
\end{figure}

Note that in a double diagonal exchange, the intermediate ideal triangulation is not a Rivin triangulation, but is nevertheless simple. Therefore:

\begin{lem}
\label{lem:DbleDiagEx}
Any two Rivin triangulations of the $4$--puncture sphere can be joined to each other by a sequence of embedded diagonal exchanges involving only simple ideal triangulations.  \qed
\end{lem}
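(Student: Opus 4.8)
The plan is to deduce the statement from the connectivity by \emph{double} diagonal exchanges already indicated for $r=4$, by factoring each such double exchange into two ordinary embedded diagonal exchanges that pass through a simple (though non-Rivin) intermediate triangulation. First I would record what the $r=4$ version of the proof of Lemma~\ref{lem:DiagEx} provides. Along a generic path $t\mapsto[f_t]$ in $\T(S)$ joining the two given Rivin triangulations, the triangulation $\lambda_{[f_t]}$ is locally constant except at the isolated times $t_0$ where the four vertices $v_1,\dots,v_4$ become coplanar; there the convex hull degenerates to a flat quadrilateral, and the cell decomposition at $t_0$ presents $S$ as two ideal quadrilateral faces $Q$ and $Q'$ glued along the $4$--cycle of edges that bound them. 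A Rivin triangulation on either side of $t_0$ is obtained by inserting one diagonal into each of $Q$ and $Q'$, and for the result to be combinatorially a tetrahedron the two faces must receive the two \emph{distinct} diagonals of the quadrilateral. Crossing $t_0$ interchanges these two diagonals, which is exactly the double diagonal exchange of Figure~\ref{fig:DoubleDiagEx}, and this reduces the lemma to the study of a single such double exchange.

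Next I would factor that double exchange, performing the two flips one face at a time. Flipping the diagonal of $Q$ first, so that $Q$ acquires the diagonal already carried by $Q'$, produces an intermediate triangulation $\mu$ in which one pair of punctures is joined by two parallel edges, one being the new diagonal of $Q$ and the other the diagonal present in $Q'$; flipping the diagonal of $Q'$ afterwards restores distinct diagonals and yields the second Rivin triangulation. Each flip takes place inside one of the embedded quadrilateral faces $Q$, $Q'$, whose four sides are distinct edges, so each flip is an \emph{embedded} diagonal exchange.

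It remains to describe the nature of $\mu$, which is the only real content of the argument. Since $\mu$ has two distinct edges joining the same pair of punctures, it fails the Steinitz condition and is \emph{not} a Rivin triangulation; but each of its edges still joins two \emph{distinct} punctures, so $\mu$ is simple. Concatenating these factorizations over the finitely many double diagonal exchanges of the first step then produces a sequence of embedded diagonal exchanges all of whose triangulations are simple, which is the assertion. The crux, and the reason one is forced out of the class of Rivin triangulations, is precisely this intermediate double edge; the saving point is that it preserves simplicity, which is all that Lemma~\ref{lem:ManySimpleDiagEx} requires downstream.
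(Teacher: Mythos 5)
Your proposal is correct and takes essentially the same route as the paper: the paper obtains connectivity by double diagonal exchanges from the same Teichm\"uller-space path argument as Lemma~\ref{lem:DiagEx}, with the convex hull degenerating to a flat quadrilateral (two ideal quadrilaterals glued along four edges) at the crossing times, and then observes that the intermediate triangulation of a double exchange is simple though not Rivin. Your factorization of each double exchange into two embedded flips through the double-edged simple triangulation $\mu$ is precisely the content of the paper's remark preceding the lemma, just spelled out in more detail.
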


Note that the case where $r=3$ is particularly trivial, since in this case $S$ admits only one Rivin triangulation, decomposing $S$ into two triangles.

%%%%%%%%%%
\section{Constructing a representation of $P_r(\mathbb S^2)$}

We can now put everything together. We fix a conformal structure on the punctured sphere $S$, namely an identification between $S$ and the complement $\widehat \C -\{ v_1, v_2, \dots, v_r \}$ of $r$ points in the Riemann sphere $\widehat\C$. 

After this geometric data, let us choose some algebraic (or number-theoretic) data. We take a root of unity $q$ such that $-q$ is a primitive $N$--root of $(-1)$. Choose an $N$--root of unity $p_j = q^{2n_j}$ for each puncture of $S$, and a square root $h = \sqrt[2]{p_1 p_2 \dots p_r}$. 

Now, pick a Rivin triangulation $\lambda$ of $S$. The points $v_i$ associate a cross-ratio weight $x_i \in \C^*$ to each edge of $\lambda$. As in \S\ref{sect:CrossRatios}, the edge weights $x_i$, the $N$--roots $p_j$ and the square root $h$ define, up to isomorphism, an irreducible  representation   $\rho_\lambda\col \T_\lambda^q \to \End (V)$ of the Chekhov-Fock algebra of $\lambda$.

Consider a diffeomorphism $\phi\col S \to S$, and the ideal triangulation $\phi(\lambda)$. Combining Lemmas~\ref{lem:ManySimpleDiagEx}, \ref{lem:DiagEx} and \ref{lem:DbleDiagEx}, the representation  $\rho \circ \Phi_{\lambda\phi(\lambda)}^q \col \T_{\phi(\lambda)}^q \to \End(V)$ makes sense. By Proposition~\ref{prop:DiagExWeights} and Lemma~\ref{lem:DiagExCrossRatio}, this  representation $\rho \circ \Phi_{\lambda\phi(\lambda)}^q$ is classified by the cross-ratio edge weights associated to the edges of $\phi(\lambda)$ by the points $v_1$, $v_2$, \dots $v_r \in \widehat \C$, plus the same puncture weights $p_j$ and the same global charge $h$ as $\rho$. 

There is also a simpler way to obtain a representation of $\T_{\phi(\lambda)}^q$ from $\rho$. Indeed, $\phi$ provides a natural isomorphism $I_{\lambda, \phi} \col \T_{\phi(\lambda)}^q \to \T_\lambda^q$, which sends the generator associated to an edge of $\phi(\lambda)$ to the generator associated to the corresponding edge of $\lambda$. The representation  $\rho \circ I_{\lambda,\phi} \col \T_{\phi(\lambda)}^q \to \End(V)$ is classified by assigning to each edge of $\phi(\lambda)$ the cross-ratio weight of the corresponding edge of $\lambda$,  by assigning to each puncture the puncture weight $p_j$ associated by $\rho$ to the preimage of this puncture under $\phi$, and by the same  global charge $h$ as $\rho$. 

Now, suppose in addition that $\phi$ fixes every puncture of $S$. Then each edge of $\lambda$ has the same cross-ratio weight as its image under $\phi$, since the corresponding cross-ratios involve the same points. Also, $\rho \circ \Phi_{\lambda\phi(\lambda)}^q $ and $\rho \circ I_{\lambda,\phi} $ then assign the same weight $p_j$ to each puncture, and have the same global charge $h$.  It follows that the representations $\rho \circ \Phi_{\lambda\phi(\lambda)}^q $ and $\rho \circ I_{\lambda,\phi} $ are isomorphic.

This means that there is an isomorphism $L_\phi \col V \to V$ such that 
$$ \rho \circ \Phi_{\lambda\phi(\lambda)}^q  =   L_\phi \circ \left( \rho \circ I_{\lambda,\phi}  (X) \right) \circ L_\phi^{-1}
$$
for every $X\in \T_{\phi(\lambda)}^q$. 

It may be more convenient to rewrite this property with the following definition. If $L\col W \to W'$ is a linear isomorphism between vector spaces, its \emph{adjoint homomorphism} $\Ad_L\col \End(W) \to \End(W')$ is defined by the property that $\Ad_L(A) = L \circ A \circ L^{-1}$. Then the above property can be rewritten as
$$
 \rho \circ \Phi_{\lambda\phi(\lambda)}^q = \Ad_{L_\phi} \circ \rho \circ I_{\lambda,\phi}  .
$$

The isomorphism $L_\phi$ is not unique since multiplying it by a constant does not change the adjoint homomorphism $\Ad_{L_\phi}$.  However, because the representation $\rho \circ I_{\lambda,\phi}$ is irreducible, this is the only ambiguity and $L_\phi$ is unique up to rescaling. In particular, the projective isomorphism $\PP L_\phi \col \PP(V) \to \PP(V)$ of the projective space of $V$ is uniquely determined. 

Recall from Theorem~\ref{thm:IrredRepClass} that the dimension of $V$ is equal to $ N^{r-3} $. We can consequently identity $\PP(V)$ to the complex projective space $\mathbb {CP}^{N^{r-3}-1}$. 

\begin{thm}
The map
$$
R \col P_r(\mathbb S^2) \to \Aut\bigl (\mathbb {CP}^{N^{r-3}-1} \bigr)
$$
which associates $\PP L_\phi$ to $[\phi] \in P_n (\mathbb S^2)$, is a group homomorphism.  Up to conjugation by an element of $\Aut(\mathbb {CP}^{N^{r-3}-1})$, it is independent of the  choices of the ideal triangulation $\lambda$ and of the representation $\rho$. 
\end{thm}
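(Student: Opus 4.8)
The plan is to verify two things: first that $R$ is well-defined on mapping classes and is a homomorphism, and second that the construction is independent of the auxiliary choices up to conjugation. For well-definedness, I first observe that the isomorphism $L_\phi$ constructed above depends only on the isotopy class $[\phi]\in P_r(\mathbb S^2)$: indeed $L_\phi$ is characterized (up to scalar) by the intertwining relation $\rho\circ\Phi^q_{\lambda\phi(\lambda)} = \Ad_{L_\phi}\circ\rho\circ I_{\lambda,\phi}$, and both the triangulation $\phi(\lambda)$ and the isomorphism $I_{\lambda,\phi}$ depend only on $[\phi]$ since everything in sight was declared to be taken up to isotopy of $S$. Because $\rho\circ I_{\lambda,\phi}$ is irreducible, the scalar ambiguity in $L_\phi$ is the only one, so $\PP L_\phi$ is genuinely well-defined and $R([\phi]) = \PP L_\phi$ makes sense.

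The homomorphism property is where the main work lies. Given $[\phi],[\psi]\in P_r(\mathbb S^2)$, I want to show $\PP L_{\phi\psi} = \PP L_\phi\circ\PP L_\psi$, equivalently $\Ad_{L_{\phi\psi}} = \Ad_{L_\phi}\circ\Ad_{L_\psi}$. The natural approach is to expand the defining intertwining relation for $\phi\psi$ and compare with the composition of the relations for $\phi$ and $\psi$ separately. Two compatibilities must be checked. The first is the functoriality of the naive identification isomorphisms, namely $I_{\lambda,\phi\psi} = I_{\psi(\lambda),\phi}\circ I_{\lambda,\psi}$ (or the appropriate variant), which follows directly from the definition of $I_{\lambda,\phi}$ as the edge-relabeling induced by $\phi$ together with $(\phi\psi)(\lambda) = \phi(\psi(\lambda))$. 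The second, and this is the genuine technical hinge, is the cocycle/functoriality property of the coordinate change isomorphisms, $\Phi^q_{\lambda\lambda''} = \Phi^q_{\lambda\lambda'}\circ\Phi^q_{\lambda'\lambda''}$ from Theorem~\ref{thm:CoordChanges}, applied to the chain $\lambda$, $\psi(\lambda)$, $(\phi\psi)(\lambda)$, combined with the equivariance $\Phi^q_{\phi(\mu)\phi(\mu')} = I_{\mu,\phi}^{-1}\circ\Phi^q_{\mu\mu'}\circ I_{\mu',\phi}$ expressing that $\phi$ carries the quantum Teichm\"uller structure to itself. Threading these through the defining relation and using Proposition~\ref{prop:ManyDiagEx} to guarantee that every representation in the chain ``makes sense,'' one finds that $\Ad_{L_{\phi\psi}}$ and $\Ad_{L_\phi}\circ\Ad_{L_\psi}$ agree on the irreducible representation, hence coincide.

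I expect the main obstacle to be precisely the bookkeeping in this cocycle computation: keeping straight which isomorphism acts on which Chekhov-Fock algebra, and verifying the equivariance property $\Phi^q_{\phi(\mu)\phi(\mu')} = I_{\mu,\phi}^{-1}\circ\Phi^q_{\mu\mu'}\circ I_{\mu',\phi}$, which encodes the fact that the coordinate changes are natural with respect to diffeomorphisms. A secondary subtlety is ensuring at each stage that the ad hoc composition $\rho\circ\Phi^q$ is defined, i.e.\ that the images of denominators remain invertible; this is handled by Lemma~\ref{lem:ManySimpleDiagEx} together with Lemmas~\ref{lem:DiagEx} and \ref{lem:DbleDiagEx}, which guarantee that any two Rivin triangulations are connected through simple triangulations by embedded diagonal exchanges.

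For the independence of the choices, I would argue as follows. Suppose we replace the pair $(\lambda,\rho)$ by another pair $(\tilde\lambda,\tilde\rho)$, where $\tilde\rho\col\T_{\tilde\lambda}^q\to\End(\tilde V)$ is the representation attached to the same puncture weights $p_j$ and global charge $h$ but to a possibly different Rivin triangulation. By Lemma~\ref{lem:ManySimpleDiagEx}, together with Lemmas~\ref{lem:DiagEx} and \ref{lem:DbleDiagEx}, the representation $\rho\circ\Phi^q_{\lambda\tilde\lambda}$ makes sense and is isomorphic to $\tilde\rho$; let $M\col V\to\tilde V$ be an intertwining isomorphism, unique up to scalar by irreducibility. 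A diagram chase, again invoking the cocycle property of Theorem~\ref{thm:CoordChanges} and the equivariance of the $\Phi^q$ under $\phi$, shows that the two isomorphisms $\tilde L_\phi$ and $M\circ L_\phi\circ M^{-1}$ satisfy the same intertwining relation for $\tilde\rho$, so they agree up to scalar and $\PP\tilde L_\phi = \Ad_{\PP M}(\PP L_\phi)$. Since $M$ is independent of $\phi$, this exhibits the new homomorphism $\tilde R$ as the conjugate of $R$ by the fixed element $\PP M\in\Aut(\mathbb{CP}^{N^{r-3}-1})$, which is exactly the claimed independence up to conjugation. The only remaining choice, that of the intertwiner realizing each isomorphism of representations, affects everything only by an overall scalar and therefore disappears in the passage to $\PP(V)$.
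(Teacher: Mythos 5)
Your proposal is correct and takes essentially the same route as the paper: the same ingredients (the cocycle property of Theorem~\ref{thm:CoordChanges}, the naturality $\Phi^q_{\psi(\lambda)\psi(\lambda')} = I_{\lambda,\psi}^{-1}\circ\Phi^q_{\lambda\lambda'}\circ I_{\lambda',\psi}$ of the coordinate changes under diffeomorphisms, irreducibility to pin down intertwiners up to scalar, and Lemmas~\ref{lem:ManySimpleDiagEx}, \ref{lem:DiagEx}, \ref{lem:DbleDiagEx} to guarantee the ad hoc compositions make sense). The only cosmetic difference is organizational: the paper proves independence of $(\lambda,\rho)$ first and then obtains the homomorphism property as a specialization of that very computation (taking $\lambda'=\phi(\lambda)$, $\rho'=\rho\circ I_{\lambda,\phi}$, $L_{\lambda\lambda'}=L_\phi$), whereas you run the two diagram chases in parallel.
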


\begin{proof}
We first prove the independence up to conjugation. Let us assume that we start with another Rivin triangulation $\lambda'$ and a representation $\rho'\col \T_{\lambda'}^q \to \End(V')$ classified by the cross-ratio edge weights associated to the edges of $\lambda'$, the puncture weights $p_j$ and the global charge $h$. Then, Lemmas~\ref{lem:ManySimpleDiagEx}, \ref{lem:DiagEx},  \ref{lem:DbleDiagEx} and Proposition~\ref{prop:DiagExWeights} show that $\rho\circ \Phi_{\lambda\lambda'}^q$ makes sense and is isomorphic to $\rho'$ by an isomorphism $L_{\lambda\lambda'}\col V' \to V$. Namely, 
$$
\rho\circ \Phi_{\lambda\lambda'}^q = \Ad_{L_{\lambda\lambda'}} \circ \rho'. 
$$
As a consequence, 
$$
\rho'\circ \Phi_{\lambda'\lambda}^q = \Ad_{L_{\lambda\lambda'}}^{-1} \circ \rho = \Ad_{L_{\lambda\lambda'}^{-1}} \circ \rho. 
$$

We will use the critical property that the coordinate change isomorphisms $\Phi_{\lambda\lambda'}^q$ are natural with respect to the action of the diffeomorphism group of $S$. This means that, for every diffeomorphism $\psi\col S \to S$, 
$$
\Phi_{\psi(\lambda)\psi(\lambda')}^q = I_{\lambda, \psi}^{-1} \circ \Phi_{\lambda\lambda'}^q \circ I_{\lambda', \psi}
$$
where $I_{\lambda, \psi} \col \T_{\psi(\lambda)}^q \to \T_{\lambda}^q $ and $I_{\lambda', \psi} \col \T_{\psi(\lambda')}^q \to \T_{\lambda'}^q $ are the canonical isomorphisms induced by $\psi$. This property is an immediate consequence of the construction of the coordinate change isomorphisms $\Phi_{\lambda\lambda'}^q$ in \cite{CF,CF2,Liu1}. 

If we replace $\rho\col \T_{\lambda}^q \to \End(V)$ by $\rho'\col \T_{\lambda'}^q \to \End(V')$ in the above construction, $L_\phi\col V \to V$ gets replaced by an isomorphism $L_\phi' \col V' \to V'$ such that $\Ad_{L_\phi'} \circ \rho' \circ I_{\lambda', \phi} =   \rho' \circ \Phi_{\lambda'\phi(\lambda')}^q $. Then,
\begin{align*}
\Ad_{L_\phi'} \circ \rho'  &=   \rho' \circ \Phi_{\lambda'\phi(\lambda')}^q \circ I_{\lambda', \phi}^{-1}\\
&=  \rho' \circ \Phi_{\lambda'\lambda}^q \circ \Phi_{\lambda\phi(\lambda)}^q \circ \Phi_{\phi(\lambda)\phi(\lambda')}^q  \circ I_{\lambda', \phi}^{-1}\\
&= \Ad_{L_{\lambda\lambda'}^{-1}} \circ \rho\circ  \Phi_{\lambda\phi(\lambda)}^q \circ I_{\lambda, \phi}^{-1} \circ\Phi_{\lambda\lambda'}^q\\
 &= \Ad_{L_{\lambda\lambda'}^{-1}}\circ  \Ad_{L_\phi} \circ \rho \circ\Phi_{\lambda\lambda'}^q\\
&= \Ad_{L_{\lambda\lambda'}^{-1}} \circ  \Ad_{L_\phi} \circ  \Ad_{L_{\lambda\lambda'}} \circ \rho'\\
&= \Ad_{L_{\lambda\lambda'}^{-1} \circ  L_\phi \circ  L_{\lambda\lambda'}} \circ \rho'.
\end{align*}
By irreducibility of $\rho'$, we conclude that $L_\phi'$ is equal to $L_{\lambda\lambda'}^{-1} \circ  L_\phi \circ  L_{\lambda\lambda'}$ up to multiplication by a constant, so that $ \PP L_\phi'= \left( \PP L_{\lambda\lambda'} \right )^{-1} \circ  \PP L_\phi \circ  \PP L_{\lambda\lambda'}$. 

Since this holds for every $\phi$, we see that replacing $\lambda$ and $\rho$ by $\lambda'$ and $\rho'$ in the construction only changes the map $R\col P_r(\mathbb S^2) \to \Aut\bigl (\PP( V )\bigr)$ by conjugation by $ \PP L_{\lambda\lambda'}$.

Having proved this, we now consider the composition of two diffeomorphisms $\phi$ and $\psi \col S \to S$, each fixing the punctures of $S$. We will first apply the above computation to the case where $\lambda' = \phi(\lambda)$, $\rho' = \rho \circ I_{\lambda, \phi}$ and $\phi$ is replaced with $\psi$. By definition of $L_\phi$, 
$$
\rho \circ \Phi_{\lambda\lambda'} ^q = \Ad_{L_\phi} \circ \rho',
$$
so that we can take $L_{\lambda\lambda'} = L_\phi$ in the above argument, which gives  that
$$
\rho' \circ \Phi_{\lambda'\psi(\lambda')}^q = \Ad_{L_\phi^{-1} \circ L_\psi \circ L_\phi}\circ  \rho' \circ I_{\lambda', \psi}.
$$
Then,
\begin{align*}
\rho \circ \Phi_{\lambda\,  \psi\circ \phi(\lambda)} ^q &= \rho\circ  \Phi_{\lambda\lambda'}^q \circ \Phi_{\lambda' \psi(\lambda') }^q \\
&= \Ad_{L_\phi} \circ \rho' \circ \Phi_{\lambda' \psi(\lambda') }^q \\
&= \Ad_{L_\phi} \circ \Ad_{L_\phi^{-1} \circ L_\psi \circ L_\phi} \circ \rho' \circ I_{\lambda', \psi} \\
&= \Ad_{L_\psi \circ L_\phi} \circ \rho  \circ I_{\lambda, \phi}  \circ I_{\phi(\lambda), \psi}  \\
&= \Ad_{L_\psi \circ L_\phi} \circ \rho  \circ I_{\lambda, \psi \circ \phi} 
\end{align*}

This shows that $L_{\psi \circ \phi}$ is equal to $L_\psi \circ L_\phi$ up to scalar multiplication. Therefore, $\PP L_{\psi \circ \phi} = \PP L_\psi \circ \PP L_\phi$, which proves that $R$ is a group homomorphism. 
\end{proof}

\end{document}